\newtheorem{theorem}{Theorem}[section]
\newtheorem{lemma}[theorem]{Lemma}
\newtheorem{proposition}[theorem]{Proposition}
\theoremstyle{definition}
\theoremstyle{remark} \theoremstyle{remark}
\newtheorem{remark}[theorem]{Remark}
\newtheorem*{theorem*}{Theorem}
\DeclareMathOperator{\im}{im}
\DeclareMathOperator{\id}{id}
\newcommand{\summ}{\sum_{\alpha=1}^{s}}
\newcommand{\etaa}{\eta_{\alpha}}
\newcommand{\xia}{\xi_{\alpha}}
\newcommand{\cF}{{\mathcal F}}
\newcommand{\R}{{\mathbb R}}
\newcommand{\Z}{{\mathbb Z}}
\numberwithin{equation}{section}
\begin{document}

\title{How to construct all metric $f$-$K$-contact manifolds}
 
 \author{Oliver Goertsches}
  \address{Philipps-Universit\"at Marburg, Fachbereich Mathematik und Informatik, Hans-Meerwein-Straße 6, 35043 Marburg, Germany}
  \email{goertsch@mathematik.uni-marburg.de}
  
 \author{Eugenia Loiudice}
 \email{loiudice@mathematik.uni-marburg.de}

 \begin{abstract}
We show that any compact metric $f$-$K$-contact, respectively $S$-manifold is obtained from a compact $K$-contact, respectively Sasakian manifold by an iteration of constructions of mapping tori, rotations, and type II deformations.
 \end{abstract}

\keywords{metric $f$-$K$-contact manifolds, $S$-manifolds, mapping torus, deformations, basic cohomology, foliations}

\maketitle
\section{Introduction} 

The class of metric $f$-$K$-contact manifolds generalizes the class of ($K$-) contact manifolds. They are $f$-manifolds (namely smooth manifolds together with a $(1,1)$-tensor $f$ of constant rank and such that $f^3+f=0$, \cite{yano}) endowed with a Riemannian metric, $s$ (Killing) vector fields $\xi_i$ and $s$ one-forms dual to the $\xi_i$'s, satisfying some compatibility conditions (see Section~\ref{sec:preliminaries}).

In addition to generalizing almost complex and almost contact manifolds (which we have in case the $f$-structure $f$ has maximal rank and the dimension of the manifold is even or respectively odd), $f$-manifolds appear naturally when studying the hypersurfaces of almost contact manifolds, see \cite{blair-ludden} for details.  

In the present paper we look at metric $f$-contact manifolds from a dual perspective: motivated by the unusual property of metric $f$-($K$-)contact and $S$-manifolds that their geometric structure is inherited by mapping tori with respect to automorphisms of the structure (see \cite{gl}), we ask whether a given metric $f$-$K$-contact or $S$-manifold is the mapping torus of a lower-dimensional such manifold. The main result of this paper reads:
\begin{theorem}
Any compact connected metric $f$-$K$-contact (resp.\ $S$-) manifold is obtained from a compact $K$-contact (resp.\ Sasakian) manifold by a finite iteration of the following operations:
\begin{enumerate}
\item construction of the mapping torus with respect to an automorphism
\item rotation
\item type II deformation
\end{enumerate}
\end{theorem}
We will recall the mapping torus construction and introduce  (anti-)rota\-tion and type II deformation of a metric $f$-($K$-)contact or $S$-manifold in Section \ref{sec:deformations}. As in the Sasakian setting \cite{BoyerGalicki}, type II deformation does not modify the characteristic vector fields, while it changes the one-forms of the structure by the addition of other one-forms. Rotation and anti-rotation do not have an analogue in the contact setting: they are a type of deformation in which one applies an appropriate (constant) base change to the characteristic vector fields.

\section{Preliminaries}\label{sec:preliminaries}

Let us review the notion of a metric $f$-contact manifold. We assume that we are given linearly independent one-forms $\eta_1,\ldots, \eta_s$ on a smooth manifold $M^{2n+s}$, as well as vector fields $\xi_1,\ldots,\xi_s$  satisfying $\eta_i(\xi_j)=\delta_{ij}$. In this setting the tangent bundle $TM$ decomposes as the sum of the parallelizable subbundle given by the span of the $\xi_i$, and the intersection of the kernels of the one-forms $\eta_i$. In addition, let $f$ be a $(1,1)$-tensor on $M$ satisfying 
\[
f(\xi_i)=0,\qquad \im f = \bigcap_{i=1}^s \ker(\eta_i),\qquad \left.f^2\right|_{\im f} = \left.-\id\right|_{\im f}.
\]
Note that $f$ is then of constant rank and satisfies $f^3+f=0$, i.e., it defines an $f$-structure in the sense of Yano \cite{yano}. For $s=0$ one recovers the notion of an almost complex, and for $s=1$ that of an almost contact manifold.

A Riemannian metric $g$ on $M$ satisfying 
\[
g(fX,fY) = g(X,Y) - \sum_{i=1}^s \eta_i(X)\eta_i(Y)
\]
is called \emph{compatible} with the $f$-structure, and in this situation one speaks of a \emph{metric $f$-manifold} $(M,f,\eta_i,\xi_i,g)$. The \emph{fundamental $2$-form} of the metric $f$-manifold $M$ is given by
\[
\omega(X,Y) = g(X,fY),
\]
for $X,Y\in TM$. One calls $M$ a \emph{metric $f$-contact manifold} if 
\[
d\eta_i = \omega
\]
for all $i$.

Let $(M,f,\xi_i,\eta_i,g)$ be a metric $f$-contact manifold. We will refer to the vector fields $\xi_i$ as the \emph{characteristic vector fields} of the structure, and denote by $\cF$ the \emph{characteristic foliation}, i.e., the foliation spanned by the characteristic vector fields. We recall that by \cite[Equation (2.4)]{cabrerizo-fern}, the characteristic vector fields commute, i.e., 
\begin{equation}\label{ab}
 [\xi_i,\xi_j]=0.
\end{equation}
By \cite[Theorem 2.6]{cabrerizo-fern} $\xi_i$ is Killing if and only if 
\begin{equation}\label{eq:fKLxi}
{\mathcal L}_{\xi_i} f=0.
\end{equation} We call $M$ a \emph{metric $f$-$K$-contact manifold} if all characteristic vector fields are Killing vector fields.

A metric $f$-contact manifold satisfying the \emph{normality} condition
\begin{equation*}
 [f,f]+2\summ d\etaa \otimes \xia =0,
\end{equation*}
is called an \emph{$S$-manifold}; here $[f,f]$ denotes the Nijenhuis torsion of $f$, i.e.,
\[
 [f,f](X,Y)=f^2[X,Y]+[fX,fY]-f[X,fY]-f[fX,Y], 
\]
where $X,Y$ are arbitrary vector fields on $M$.
By \cite[Theorem 1.1]{blair70}, the characteristic vector fields of an $S$-manifold are Killing, i.e., $S$-manifolds are metric $f$-$K$-contact.

\section{Deformation of metric $f$-$K$-contact manifolds}\label{sec:deformations}
In this section we describe four ways to construct new metric $f$-$K$-contact manifolds out of old ones. We explain how the well-known type II deformation generalizes from the Sasakian setting to metric $f$-$K$-contact manifolds, introduce new constructions called \emph{rotation} and \emph{anti-rotation} that do not exist for Sasakian manifolds, and recall the construction of the mapping torus of a metric $f$-$K$-contact manifold from \cite{gl}.

We begin with rotation and anti-rotation. In these deformations, the characteristic foliation $\cF$ and the $(1,1)$-tensor $f$ remain the same; essentially, one applies an appropriate (constant) base change to the characteristic vector fields $\xi_i$.

\begin{lemma}\label{rotation}
 Let $(M,f,\xi_1,\dots,\xi_{s}, \eta_1,\dots,\eta_{s},g)$ be a metric $f$-($K$-)contact manifold (resp.\ an $S$-manifold), and $A=(a_{ij})\in {\mathrm{O}}(s)$ an orthogonal $s\times s$ matrix, such that $c_i:=\sum_{j=1}^s a_{ij}\neq 0$ for every $i\in \{1,\dots ,s\}$. Then, the tensor fields
\[
 {\eta}'_i:=\sum_{t=1}^s  a_{ti}c_t \,\eta_t, \quad {\xi}'_i:= \sum_{t=1}^s \frac{1}{c_t}a_{ti}\,\xi_t, \quad i\in \{1,\dots ,s\},
\]
together with
 \begin{equation}\label{gfr}
 \begin{aligned}
 &{g}':=g-\sum_{\alpha=1}^s\etaa \otimes \etaa +\sum_{\alpha=1}^s{\eta}'_{\alpha} \otimes {\eta}'_{\alpha}, \quad {f}':=f,
 \end{aligned}
\end{equation} 
determine a new metric $f$-($K$-)contact structure (resp.\ an $S$-structure) on $M$, that we call a \emph{rotation} of $(f,\xi_i, \eta_i,g)$.

\vspace{0.2 cm}

The tensor fields
\[
 \tilde{\eta}_i:=\frac{1}{c_i}\sum_{t=1}^s  a_{it} \,\eta_t, \quad \tilde{\xi}_i:= {c_i}\sum_{t=1}^s a_{it}\,\xi_t, \quad i\in \{1,\dots ,s\},
\]
 \begin{equation}\label{gf}
 \begin{aligned}
 &\tilde{g}:=g-\sum_{\alpha=1}^s\etaa \otimes \etaa +\sum_{\alpha=1}^s\tilde{\eta}_{\alpha} \otimes \tilde{\eta}_{\alpha}, \quad\tilde{f}:=f,
 \end{aligned}
\end{equation} 
determine a new metric $f$-($K$-)contact structure (resp.\ an $S$-structure) on $M$, that we call an \emph{anti-rotation} of $(f,\xi_i, \eta_i,g)$. 
\end{lemma}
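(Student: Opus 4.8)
The plan is to exploit that in both the rotation and the anti-rotation the tensor $f$ is left unchanged, while the new characteristic vector fields and one-forms arise from the old ones by a \emph{constant} invertible linear change of basis that preserves the splitting $TM=\spn\{\xi_1,\dots,\xi_{s}\}\oplus\im f$. First I would record the elementary facts about $A\in\mathrm{O}(s)$ that drive everything. Recalling $c_i=\sum_{j}a_{ij}$ and using orthogonality of $A$, one gets for the rotation $\eta'_i(\xi'_j)=\sum_{t}a_{ti}a_{tj}=(A^{\top}A)_{ij}=\delta_{ij}$, then $\sum_{t}a_{ti}c_t=\sum_{j}(A^{\top}A)_{ij}=1$, and $\sum_{i}\xi'_i=\sum_{t}\xi_t$; for the anti-rotation the analogues are $\tilde\eta_i(\tilde\xi_j)=\tfrac{c_j}{c_i}(AA^{\top})_{ij}=\delta_{ij}$, $\tfrac1{c_i}\sum_{t}a_{it}=1$ (this one being simply the definition of $c_i$), and $\sum_{i}\tilde\xi_i=\sum_{t}\xi_t$. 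I would also note that the coefficient matrices $(a_{ti}c_t)_{t,i}$, $(\tfrac1{c_t}a_{ti})_{t,i}$ and their anti-rotation counterparts are invertible --- their determinants being $\pm\prod_t c_t$ resp.\ $\pm\prod_t c_t^{-1}$, both nonzero by hypothesis --- so that $\spn\{\eta'_1,\dots,\eta'_s\}=\spn\{\eta_1,\dots,\eta_s\}$ and $\spn\{\xi'_1,\dots,\xi'_s\}=\spn\{\xi_1,\dots,\xi_s\}$, and likewise for the anti-rotation.

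Granting these facts, the metric $f$-structure axioms for $(f,\xi'_i,\eta'_i,g')$ are routine: the duality $\eta'_i(\xi'_j)=\delta_{ij}$ holds as above; $f\xi'_i=\sum_t\tfrac1{c_t}a_{ti}\,f\xi_t=0$; each $\eta'_i$ is a linear combination of the $\eta_t$ and hence annihilates $\im f$, so by equality of spans $\bigcap_i\ker\eta'_i=\bigcap_i\ker\eta_i=\im f=\im f'$; and the relation $\left.f^2\right|_{\im f}=\left.-\id\right|_{\im f}$ is untouched since $f'=f$ and $\im f'=\im f$. For compatibility I would use the standard identity $g(X,\xi_j)=\eta_j(X)$ --- obtained by putting $Y=\xi_j$ in the compatibility of $g$ and using $f\xi_j=0$ --- which in particular re-confirms that each $\eta_j$, hence each $\eta'_j$, vanishes on $\im f$. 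Expanding \eqref{gfr} and using $fX,fY\in\im f$ then gives $g'(fX,fY)=g(fX,fY)$ and $g'(X,Y)-\sum_i\eta'_i(X)\eta'_i(Y)=g(X,Y)-\sum_i\eta_i(X)\eta_i(Y)$, which agree by compatibility of $g$. Positive definiteness of $g'$ follows because $g'$ coincides with $g$ on $\im f$, is $g'$-orthogonal there to $\spn\{\xi'_i\}=\spn\{\xi_i\}$, and satisfies $g'(\xi'_i,\xi'_j)=\eta'_i(\xi'_j)=\delta_{ij}$.

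For the contact condition I would first note that the new fundamental $2$-form agrees with the old one: the correction terms in \eqref{gfr} vanish on $\im f$, so $\omega'(X,Y)=g'(X,fY)=g(X,fY)=\omega(X,Y)$. Then $d\eta'_i=\sum_t a_{ti}c_t\,d\eta_t=\big(\sum_t a_{ti}c_t\big)\,\omega=\omega=\omega'$, using $d\eta_t=\omega$ for all $t$ and the identity $\sum_t a_{ti}c_t=1$. In the $f$-$K$-contact case $\mathcal L_{\xi'_i}f=\sum_t\tfrac1{c_t}a_{ti}\,\mathcal L_{\xi_t}f=0$, since each $\mathcal L_{\xi_t}f=0$ by \eqref{eq:fKLxi} applied to the original $K$-contact structure; and since $(f,\xi'_i,\eta'_i,g')$ has just been shown to be metric $f$-contact, the same equivalence \eqref{eq:fKLxi} now gives that every $\xi'_i$ is Killing for $g'$. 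For the $S$-manifold case the normality tensor of the new structure is $[f,f]+2\summ d\eta'_\alpha\otimes\xi'_\alpha$, and since every $d\eta'_\alpha=\omega$ this equals $[f,f]+2\,\omega\otimes\big(\summ\xi'_\alpha\big)=-2\,\omega\otimes\big(\summ\xia\big)+2\,\omega\otimes\big(\summ\xia\big)=0$, using $\summ\xi'_\alpha=\summ\xia$ together with the normality of the original $S$-manifold, which gives $[f,f]=-2\summ d\etaa\otimes\xia=-2\,\omega\otimes\big(\summ\xia\big)$.

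The anti-rotation is handled by the identical argument, with the factors $c_i$ replaced by $c_i^{-1}$ and the orthogonality of $A$ used in transposed form; the only inputs needed are the identities $\tilde\eta_i(\tilde\xi_j)=\delta_{ij}$, $d\tilde\eta_i=\omega$ and $\sum_i\tilde\xi_i=\sum_i\xi_i$ recorded above, applied now to \eqref{gf}. I do not expect a genuine obstacle here: the entire content is the linear algebra of $\mathrm{O}(s)$ together with the defining relations of a metric $f$-($K$-)contact structure. The one slightly non-obvious point is the identity $\sum_t a_{ti}c_t=1$ (and its anti-rotation analogue), which is precisely what forces $d\eta'_i$ to equal the fundamental form $\omega'$ exactly rather than some nonzero multiple of it, and which is also what makes the normality tensor cancel in the $S$-case.
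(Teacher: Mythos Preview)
Your proposal is correct and follows essentially the same direct-verification approach as the paper: both check $\eta'_i(\xi'_j)=\delta_{ij}$, $d\eta'_i=\omega'=\omega$, compatibility of $g'$, and then handle the $K$-contact and $S$-cases via $\mathcal L_{\xi'_i}f=\sum_t\tfrac1{c_t}a_{ti}\mathcal L_{\xi_t}f$ and $\sum_i\xi'_i=\sum_i\xi_i$ respectively, with the anti-rotation treated symmetrically. Your write-up is in fact slightly more thorough than the paper's, as you explicitly address positive definiteness of $g'$ and the equality of spans, which the paper leaves implicit.
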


\begin{proof}
We prove only the statements for the rotation; the result for the anti-rotation is entirely analogous.  From \eqref{gfr} it follows that the fundamental $2$-form ${\omega}'$ of $({f}', {\xi}'_i, {\eta}'_i,{g}')$ coincides with the fundamental $2$-form $\omega$ of $(f,\xi_i, \eta_i,g)$, and that 
 \begin{align*}
  {g}'({f}'X,{f}'Y)&=g(fX,fY)=g(X,Y)-\sum_{\alpha=1}^s\etaa (X)  \etaa(Y)\\
 &={g}'(X,Y)- \sum_{\alpha=1}^s{\eta}'_{\alpha}(X) {\eta}'_{\alpha}(Y)
 \end{align*}
for all vector fields $X,Y$ on $M$. Moreover, for all $i,k\in \{1,\dots ,s\}$,

\begin{equation*}
 {\eta}'_i({\xi}'_j)
 =\sum_{t,k=1}^s a_{kj} a_{ti}c_t \frac{1}{c_k} \,\delta_{tk}=\delta_{ij},\quad d {\eta}'_i=\sum_{t,j=1}^s  a_{ti} a_{tj} d {\eta}_j=\sum_{j=1}^s \delta_{ij} d {\eta}_j = {\omega}',
\end{equation*}
\begin{align*}
 {g}'({\xi}'_i, {\xi}'_j)&=g({\xi}'_i, {\xi}'_j)-\sum_{\alpha=1}^s\etaa({\xi}'_i) \etaa ({\xi}'_j) +\delta_{ij}\\
 &= \sum_{t,k=1}^s \frac{a_{ti}a_{kj}}{c_tc_k} \delta_{tk} - \sum_{\alpha=1}^s \frac{a_{\alpha i}a_{\alpha j}}{c_\alpha^2} + \delta_{ij}=\delta_{ij},
\end{align*}
and
\begin{align*}
 -\id+\summ {\eta}'_\alpha\otimes{\xi}'_\alpha &= -\id+\summ \left(\sum_{t=1}^s  a_{t \alpha }c_t \,\eta_t\right)\otimes\left( \sum_{j=1}^s  \frac{1}{c_j}a_{j \alpha } \,\xi_j \right)\\
 &= -\id+ \sum_{t,j=1}^s \frac{c_t}{c_j}\left(\summ a_{t \alpha } a_{j \alpha } \right) \eta_t \otimes \xi_j\\
 &= -\id+ \sum_{t,j=1}^s \delta_{tj} \eta_t \otimes \xi_j=-\id+\sum_{j=1}^s \eta_j \otimes \xi_j =f^2= {f}'^2.
 \end{align*}
hence $({f}', {\xi}'_i, {\eta}'_i, {g}')$ is a metric $f$-contact structure.  
 Moreover, since
for every $i\in \{1,\dots,s\}$,
\begin{equation*}
\mathcal{L}_{{\xi}'_i}{f}'=\sum_{t=1}^s \frac{1}{c_t} a_{ti}\mathcal{L}_{{\xi}_t}{f},
\end{equation*}
and, because of $d\eta_\alpha' = \omega = d\eta_\alpha$ and $\sum_{i=1}^s \xi_i' = \sum_{i=1}^s \xi_i$, 
\begin{equation*}
 \,[{f}',{f}']+2 \summ d {\eta}'_\alpha\otimes {\xi}'_\alpha
  =[{f},{f}]+2 \sum_{j=1}^s  d {\eta}_j\otimes {\xi}_j,
\end{equation*}
the rotation $({f}', {\xi}'_i, {\eta}'_i, {g}')$ of a metric $f$-$K$-contact or of an $S$-structure on $M$ are metric $f$-contact structures of the same type.
\end{proof}

\begin{remark}\label{remark rotation}
Given a metric $f$-contact manifold $(M, f,\xi_i, \eta_i,g)$, the operations of rotation and anti-rotation with respect to an orthogonal matrix $A$ as in Lemma \ref{rotation} are inverse to each other. 
\end{remark}

We define now type II deformations in the context of metric $f$-contact manifolds. The following definition generalizes the corresponding definition from the Sasakian setting, see \cite{BoyerGalicki}, p.\ 240.

\begin{lemma} 
 Let $(M^{2n+s},f,\xi_1, \dots, \xi_s,\eta_1,\dots,\eta_s,g)$ be a metric $f$-($K$-)con\-tact (resp.\ $S$-) manifold. Let $\theta_1,\dots,\theta_s$ be closed, $\cF$-basic one-forms on $M$.  Then, the one-forms $$\bar{\eta}_i:=\eta_i+\theta_i, \; i\in\{1,\dots,s\},$$ 
together with the tensors 
$$\xi_1,\dots,\xi_s, \; \bar{g}:=g+\sum_{i=1}^s \eta_i \otimes \theta_i + \theta_i\otimes \bar{\eta}_i, \; 
\bar{f}=f-\sum_{i=1}^s (\theta_i \circ f) \otimes \xi_i,$$ 
determine a new metric $f$-$K$-contact (resp.\ $S$-)structure on $M$, that we call a \emph{type II deformation} of $(f,\xi_1, \dots, \xi_s,\eta_1,\dots,\eta_s,g)$. 
\end{lemma}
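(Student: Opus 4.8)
The strategy is to verify the four defining properties of a metric $f$-$K$-contact structure for $(\bar f,\xi_i,\bar\eta_i,\bar g)$ directly, using the assumption that each $\theta_i$ is closed and $\cF$-basic (so that $\theta_i(\xi_j)=0$ and $\cL_{\xi_j}\theta_i=0$ for all $i,j$), and then check the normality condition separately for the $S$-case. First I would record the immediate consequences of $\theta_i$ being $\cF$-basic: $\bar\eta_i(\xi_j)=\eta_i(\xi_j)+\theta_i(\xi_j)=\delta_{ij}$, and $\bar g(\xi_i,\xi_j)=g(\xi_i,\xi_j)+\sum_k\big(\eta_k(\xi_i)\theta_k(\xi_j)+\theta_k(\xi_i)\bar\eta_k(\xi_j)\big)=\delta_{ij}$. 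Next, since $d\theta_i=0$, we get $d\bar\eta_i=d\eta_i=\omega$; one must then check that $\omega$ is still the fundamental $2$-form of the new structure, i.e.\ $\bar\omega(X,Y):=\bar g(X,\bar fY)=\omega(X,Y)$. This is the first genuine computation: expand $\bar g(X,\bar f Y)$ using the definitions of $\bar g$ and $\bar f$, and use $f(\xi_i)=0$, $\theta_i\circ f$ vanishing on $\xi_j$, and $\omega(X,\xi_i)=0$ to cancel the correction terms against each other.

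The second computation is the metric compatibility $\bar g(\bar f X,\bar f Y)=\bar g(X,Y)-\sum_i\bar\eta_i(X)\bar\eta_i(Y)$. Here I would expand both sides: on the left, $\bar f X=fX-\sum_i\theta_i(fX)\xi_i$, so $\bar g(\bar fX,\bar fY)$ splits into $g(fX,fY)$ plus cross terms involving $\theta_k(\xi_i)$ (which vanish) and a term $\sum_i\theta_i(fX)\theta_i(fY)$ coming from $\bar g(\xi_i,\xi_j)=\delta_{ij}$; on the right, $\bar g(X,Y)-\sum_i\bar\eta_i(X)\bar\eta_i(Y)$ expands to $g(X,Y)+\sum_i(\eta_i(X)\theta_i(Y)+\theta_i(X)\bar\eta_i(Y))-\sum_i(\eta_i+\theta_i)(X)(\eta_i+\theta_i)(Y)$, and after using $g(fX,fY)=g(X,Y)-\sum_i\eta_i(X)\eta_i(Y)$ everything should reduce to the identity $\sum_i\theta_i(fX)\theta_i(fY)=0$ — which does \emph{not} hold pointwise, so in fact the bookkeeping must instead leave both sides equal without that spurious term; the correct cancellation uses $\theta_i(fX)\theta_i(fY)$ reappearing on the right through $\theta_i(X)\theta_i(Y)$ only after noting $\bar f$, not $f$, is the relevant tensor. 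I also need $\bar f(\xi_i)=f(\xi_i)-\sum_j\theta_j(f\xi_i)\xi_j=0$, $\bar f^2|_{\im\bar f}=-\id$, and $\im\bar f=\bigcap\ker\bar\eta_i$; the last of these follows because $\bar\eta_i$ and $\eta_i$ have the same kernel intersection (as $\theta_i$ is $\cF$-basic, hence $\theta_i|_{\im f}$ determines $\bar f$ there and $\bar f=f$ modulo $\spn\{\xi_i\}$).

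For the $K$-contact part I would show $\cL_{\xi_i}\bar f=0$ and $\cL_{\xi_i}\bar\eta_i=0$: the latter is immediate from $\cL_{\xi_i}\eta_i=0$ (true in any metric $f$-$K$-contact manifold) and $\cL_{\xi_i}\theta_j=0$; for the former, $\cL_{\xi_i}\bar f=\cL_{\xi_i}f-\sum_j\cL_{\xi_i}(\theta_j\circ f)\otimes\xi_j-\sum_j(\theta_j\circ f)\otimes\cL_{\xi_i}\xi_j$, and each summand vanishes using \eqref{eq:fKLxi}, $[\xi_i,\xi_j]=0$ from \eqref{ab}, and $\cL_{\xi_i}(\theta_j\circ f)=(\cL_{\xi_i}\theta_j)\circ f+\theta_j\circ(\cL_{\xi_i}f)=0$. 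Finally, for the $S$-case, I expect the main obstacle: one must verify $[\bar f,\bar f]+2\sum_\alpha d\bar\eta_\alpha\otimes\xi_\alpha=0$. Since $d\bar\eta_\alpha=d\eta_\alpha$, it suffices to show $[\bar f,\bar f]=[f,f]$. Writing $\bar f=f-\sum_i(\theta_i\circ f)\otimes\xi_i$ and expanding the Nijenhuis torsion, the correction terms involve $d(\theta_i\circ f)$ and brackets with $\xi_i$; here the closedness of $\theta_i$ (so $d\theta_i=0$, giving $(\theta_i\circ f)$ a controllable exterior derivative via $\theta_i$ being basic and $d\eta_i=\omega=g(\cdot,f\cdot)$), together with $[\xi_i,\xi_j]=0$ and $\cL_{\xi_i}f=0$, should force all corrections to cancel. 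This is the step requiring the most careful multilinear algebra, and it is where the hypotheses "closed" and "$\cF$-basic" are both genuinely used; I would model the computation on the Sasakian case in \cite{BoyerGalicki} and reduce to checking the identity on pairs of basic vector fields and on pairs involving the $\xi_i$ separately.
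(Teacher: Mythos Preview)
Your overall approach is the paper's: verify each axiom by direct computation. The place where your plan has a genuine gap is exactly the one you flag, the metric compatibility $\bar g(\bar fX,\bar fY)=\bar g(X,Y)-\sum_i\bar\eta_i(X)\bar\eta_i(Y)$. The cross terms you dismiss as ``involving $\theta_k(\xi_i)$'' do \emph{not} vanish: from the definition of $\bar g$, with $\eta_k(fX)=0$, $\theta_k(\xi_j)=0$ and $\bar\eta_k(\xi_j)=\delta_{kj}$, one gets $\bar g(fX,\xi_j)=\theta_j(fX)$ and $\bar g(\xi_i,fY)=\theta_i(fY)$, and also $\bar g(fX,fY)=g(fX,fY)+\sum_k\theta_k(fX)\theta_k(fY)$ (since $\bar\eta_k(fY)=\theta_k(fY)$). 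Expanding $\bar g(\bar fX,\bar fY)$ by bilinearity then produces four copies of $\sum_i\theta_i(fX)\theta_i(fY)$ with signs $+,-,-,+$, which cancel to leave $g(fX,fY)$; your spurious leftover term comes from having kept only the last of these. The paper short-circuits this by noting $\bar\eta_i\circ\bar f=0$, so one can reduce to $g(\bar fX,fY)=g(fX,fY)$ in one line.

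Two smaller corrections. First, ``$\bar\eta_i$ and $\eta_i$ have the same kernel intersection'' is false (for $X\in\im f$ one has $\bar\eta_i(X)=\theta_i(X)$, generically nonzero); the correct argument is that $\bar\eta_i\circ\bar f=0$ directly, and then a rank count gives $\im\bar f=\bigcap_i\ker\bar\eta_i$. Second, in the $S$-case your claim $[\bar f,\bar f]=[f,f]$ is true, but not from generic cancellation: expanding the Nijenhuis torsion and using $d\theta_i=0$, $\theta_i$ basic, and $\cL_{\xi_j}f=0$ leaves $[\bar f,\bar f](X,Y)=[f,f](X,Y)-\sum_i\theta_i\big([f,f](X,Y)\big)\xi_i$, and the second term vanishes only because the normality condition forces $[f,f]$ to take values in $\spn\{\xi_\alpha\}$, where the basic forms $\theta_i$ vanish. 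This is the step where ``closed'' and ``$\cF$-basic'' are both essential, as you anticipated.
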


\begin{proof}
 Since the $1$-forms $\theta_1,\dots, \theta_s$ are closed and $\cF$-basic, 
 \[
 \bar{\eta}_i(\xi_j)=\delta_{ij}, \quad  d \bar{\eta}_i=d \eta_i=:\omega, \quad i,j\in \{1,\dots,s\}.
 \]
The kernel of the endomorphism $\bar{f}$ equals that of $f$, i.e., the span of the vector fields $\xi_i$, and 
\begin{align*}
\bar{f}^2X &= f(\bar{f}X)-\sum_{i=1}^s(\theta_i\circ f)(\bar{f}X)\xi_i
  ={f}^2X-\sum_{i=1}^s(\theta_i\circ f)(fX)\xi_i\\
  &=-X+\sum_{i=1}^s\eta_i(X)\xi_i+\sum_{i=1}^s\theta_i(X)\xi_i
  =-X+\sum_{i=1}^s\bar{\eta}_i(X)\xi_i
  \end{align*}
for every vector field $X$ on $M$. Moreover, for vector fields $X,Y$ on $M$, we compute
\begin{align*}
  \bar{g}(X,\bar{f}Y) &= {g}(X,\bar{f}Y)+\sum_{i=1}^s\left( \eta_i(X) \theta_i(\bar{f}Y) + \theta_i(X)\bar{\eta}_i(\bar{f}Y)\right)\\
  &= g(X,fY)-\sum_{j=1}^s\theta_j(fY)g(X,\xi_j)+\sum_{i=1}^s\eta_i(X) \theta_i({f}Y)\\
  &=g(X,fY)=\omega(X,Y)=d \bar{\eta}_i(X,Y)
\end{align*}
and
\begin{align*}
  \bar{g}(\bar{f}X,\bar{f}Y)&={g}(\bar{f}X,{f}Y)={g}({f}X- \sum_{i=1}^s\theta_i(fX)\xi_i,{f}Y)\\
  &={g}({f}X,{f}Y)=g(X,Y)-\sum_{i=1}^s\eta_i(X)\eta_i(Y)\\
  &=\bar{g}(X,Y)-\sum_{i=1}^s\left(\eta_i(X)\theta_i(Y)+\theta_i(X)\bar{\eta}_i(Y) \right)   -\sum_{i=1}^s\eta_i(X)\eta_i(Y)\\
  &=\bar{g}(X,Y)-\sum_{i=1}^s\bar{\eta}_i(X)\bar{\eta}_i(Y),
\end{align*}
and hence $(\bar{f},\xi_i,\bar{\eta}_i,\bar{g})$ is a metric $f$-contact structure on $M$. 
By definition of $\bar{f}$ we obtain
\begin{equation}\label{l}
 \mathcal{L}_{\xi_j}\bar{f}=\mathcal{L}_{\xi_j}{f}-\sum_{i=1}^s\mathcal{L}_{\xi_j}\left((\theta_i\circ f)\otimes\xi_i\right)
 =\mathcal{L}_{\xi_j}{f}-\sum_{i=1}^s\left(\mathcal{L}_{\xi_j}(\theta_i\circ f)\right)\otimes\xi_i.
\end{equation}
If $(f,\xi_i,\eta_i,g)$ is a metric $f$-$K$-contact structure, then $\mathcal{L}_{\xi_j}{f}=0$ for every $j\in \{1,\dots,s\}$ by Equation \eqref{eq:fKLxi}, and Equation~\eqref{l} becomes 
\begin{align*}
( \mathcal{L}_{\xi_j}\bar{f})X&=-\sum_{i=1}^s(\mathcal{L}_{\xi_j}(\theta_i\circ f))(X)\xi_i = \sum_{i=1}^s (-\mathcal{L}_{\xi_j}(\theta_i( fX))+\theta_i( f[\xi_j,X]))\xi_i\\
 &=\sum_{i=1}^s (-\mathcal{L}_{\xi_j}(\theta_i( fX))+\theta_i[\xi_j,fX])\xi_i= -\sum_{i=1}^s(\mathcal{L}_{\xi_j}\theta_i)(fX)\xi_i=0,
\end{align*}
where $X$ is any vector field on $M$. Then, by \cite[Theorem 2.6]{cabrerizo-fern}, $(\bar{f},\xi_i,\bar{\eta}_i,\bar{g})$ is a metric $f$-$K$-contact structure on $M$.

 By straightforward computations using Equation \eqref{ab} and the fact that the $\theta_i$'s are basic and closed, we obtain
\begin{equation}\label{f}
\begin{aligned}
 \,[\bar{f},\bar{f}](X,Y) = \;& [{f},{f}](X,Y)-\sum_{i=1}^s\theta_i([{f},{f}](X,Y))\xi_i\\
 &-\sum_{i=1}^s\theta_i(fX)(\mathcal{L}_{\xi_j}f)Y +\sum_{i=1}^s\theta_i(fY)(\mathcal{L}_{\xi_j}f)X .
\end{aligned}
\end{equation}
 So if we assume that $(f,\xi_i,\eta_i,g)$ is an $S$-structure on $M$, then $\mathcal{L}_{\xi_j}{f}=0$ (see Section~\ref{sec:preliminaries}), and from \eqref{f} we get
\begin{equation*}
\begin{aligned}
 \,[\bar{f},\bar{f}](X,Y)+2\sum_{i=1}^s d\bar{\eta}_i(X,Y){\xi}_i 
  = \, & [{f},{f}](X,Y)-\sum_{i=1}^s\theta_i([{f},{f}](X,Y))\xi_i \\
  &+2\sum_{i=1}^s d{\eta}_i(X,Y){\xi}_i =0;\\
\end{aligned}
\end{equation*}
hence $(\bar{f},\xi_i,\bar{\eta}_i,\bar{g})$ is again an $S$-structure on $M$.
\end{proof}

\begin{remark}
Type II deformation, rotation and anti-rotation do not change the characteristic foliation $\cF$ of a metric $f$-contact manifold.
\end{remark}

Rotations and type II deformations commute with each other in the sense of the following lemma. 

\begin{lemma} \label{lem:rottypeIIcommute}
 Let $(M,f,\xi_1,\dots,\xi_{s}, \eta_1,\dots,\eta_{s},g)$ be a metric $f$-contact manifold. Let $\theta_i$, $i=1,\ldots,s$, be closed, $\cF$-basic, one-forms on $M$, $A=(a_{ij})\in {\mathrm{O}}(s)$, and 
  $\tilde{\theta}_i:=\frac{1}{c_i}\sum_{k=1}^s a_{ik}\theta_k$, with $c_i:=\sum_{j=i}^s a_{ij}\neq 0$, $i\in \{ 1,\dots,s\}$. 
  
  The operations in (1) and (2) lead to the same metric $f$-contact structure on $M$:
 \begin{enumerate}
  \item Rotation of $(f,\xi_i,\eta_i,g)$ with respect to $A$ and then type II deformation with respect to $\theta_1, \dots,\theta_s$.
  \item Type II deformation of $(f,\xi_i,\eta_i,g)$ with respect to  $\tilde{\theta}_1, \dots,\tilde{\theta}_s$ and then rotation with respect to $A$. 
 \end{enumerate}

\end{lemma}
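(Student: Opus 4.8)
The claim is a commutation identity between two explicit constructions, so the proof is a direct computation: take the data $(M,f,\xi_i,\eta_i,g)$, push it through route (1) and route (2), and verify that the resulting quintuples $(f,\xi_i,\eta_i,g)$ agree term by term. Since both rotation and type II deformation leave $\mathcal{F}$ and $f$ essentially unchanged (type II modifies $f$ only by a $\xi_i$-valued correction built from the $\theta_i\circ f$), the only genuinely new quantities to track are the one-forms and the metric; the $(1,1)$-tensor will then follow formally. So the first thing I would do is set up clean notation: write $\eta_i'=\sum_t a_{ti}c_t\,\eta_t$, $\xi_i'=\sum_t \frac{1}{c_t}a_{ti}\,\xi_t$ for the rotation, and then apply type II with $\theta_i$ to get $\bar\eta_i' = \eta_i' + \theta_i$; on the other side, apply type II with $\tilde\theta_i=\frac{1}{c_i}\sum_k a_{ik}\theta_k$ first to get $\eta_i+\tilde\theta_i$, and then rotate, obtaining $\sum_t a_{ti}c_t(\eta_t+\tilde\theta_t)$.

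**Key steps.** The heart of the matter is the one-form identity
\[
\sum_t a_{ti}c_t\,\tilde\theta_t \;=\; \theta_i,
\]
which is where the specific choice $\tilde\theta_i = \frac{1}{c_i}\sum_k a_{ik}\theta_k$ is engineered to work: substituting gives $\sum_{t}a_{ti}c_t\cdot\frac{1}{c_t}\sum_k a_{tk}\theta_k = \sum_k\big(\sum_t a_{ti}a_{tk}\big)\theta_k = \sum_k \delta_{ik}\theta_k = \theta_k$ by orthogonality of $A$ (the $c_t$ cancel precisely because $\tilde\theta_t$ carries a $\frac{1}{c_t}$ and $\eta_t'$ carries a $c_t$). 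Once the one-forms match, I would check the characteristic vector fields: rotation sends $\xi_i\mapsto\xi_i'$ and type II leaves them fixed, so both routes yield the same $\xi_i'$ — here nothing to prove beyond observing type II does not touch the $\xi_i$. For the metric, I would use the uniform formula each deformation obeys: rotation replaces $g$ by $g - \sum\eta_\alpha\otimes\eta_\alpha + \sum\eta_\alpha'\otimes\eta_\alpha'$, and type II replaces $g$ by $g + \sum(\eta_i\otimes\theta_i + \theta_i\otimes\bar\eta_i) = g + \sum(\eta_i\otimes\theta_i+\theta_i\otimes\eta_i+\theta_i\otimes\theta_i)$, i.e.\ in symmetric form $\bar g = g + \sum\big((\eta_i+\theta_i)^{\odot2} - \eta_i^{\odot2}\big)$ where $\alpha^{\odot2}$ denotes $\alpha\otimes\alpha$. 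Thus both operations act on $g$ by "subtract $\sum(\text{old one-form})^{\odot 2}$, add $\sum(\text{new one-form})^{\odot 2}$", and composing two such operations just telescopes: route (1) gives $g - \sum\eta_i^{\odot2} + \sum(\eta_i')^{\odot2} - \sum(\eta_i')^{\odot2} + \sum(\bar\eta_i')^{\odot2} = g - \sum\eta_i^{\odot2} + \sum(\bar\eta_i')^{\odot2}$, and route (2) gives $g - \sum\eta_i^{\odot2} + \sum(\eta_i+\tilde\theta_i)^{\odot2} - \sum(\eta_i+\tilde\theta_i)^{\odot2} + \sum(\bar\eta_i')^{\odot2}$, which is the same; the intermediate one-forms cancel automatically and only the initial $\eta_i$ and the final one-forms survive. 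Finally, for $f$: on route (1), rotation keeps $f'=f$, and type II produces $f' - \sum(\theta_i\circ f)\otimes\xi_i' $; on route (2), type II produces $f - \sum(\tilde\theta_i\circ f)\otimes\xi_i$, and rotation keeps it but one must check $\sum_i(\tilde\theta_i\circ f)\otimes\xi_i = \sum_i(\theta_i\circ f)\otimes\xi_i'$. This reduces, after composing with $f$ on the right, to the same orthogonality computation: $\sum_i\tilde\theta_i\otimes\xi_i = \sum_i\big(\frac{1}{c_i}\sum_k a_{ik}\theta_k\big)\otimes\xi_i$ and $\sum_i\theta_i\otimes\xi_i' = \sum_i\theta_i\otimes\big(\sum_t\frac{1}{c_t}a_{ti}\xi_t\big) = \sum_{i,t}\frac{1}{c_t}a_{ti}\,\theta_i\otimes\xi_t$; relabeling and using $\sum_i a_{ik}a_{it} = \delta_{kt}$ shows these agree.

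**Main obstacle.** There is no conceptual obstacle — the definitions are rigged so that everything collapses via $A^TA = I$. The one point requiring care is bookkeeping: the rotation one-forms $\eta_i'$ use the matrix entries $a_{ti}$ (i.e.\ the transpose indexing) and scaling by $c_t$, while the rotation vector fields $\xi_i'$ use $a_{ti}/c_t$, and $\tilde\theta_i$ uses $a_{ik}$ with a $1/c_i$; one must consistently track which index is summed and confirm the $c$-factors cancel in each of the three identities (one-forms, $f$-correction term, metric). I would organize the proof as three short displayed computations — the one-form identity, the $f$-term identity, and the telescoping of the metric — each invoking only $\sum_t a_{ti}a_{tk}=\delta_{ik}$, and then conclude that since all five tensors $(f,\xi_i,\eta_i,g)$ coincide, the two composite structures are identical. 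I would also remark that the hypotheses guaranteeing both constructions are well-defined — namely $c_i\neq 0$ for $A$, and the $\theta_i$ (hence the $\tilde\theta_i$, being $\R$-linear combinations of them) closed and $\mathcal{F}$-basic — are preserved along the way, so no new non-degeneracy condition is needed.
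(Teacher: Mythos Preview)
Your proposal is correct and follows essentially the same direct-computation approach as the paper: set up the two composite structures, verify the one-forms agree via the orthogonality identity $\sum_t a_{ti}a_{tk}=\delta_{ik}$, observe the characteristic vector fields trivially match, telescope the metric, and check the $f$-correction terms coincide. Two cosmetic remarks: in your one-form computation the final ``$=\theta_k$'' should read $\theta_i$, and for the $f$-term identity $\sum_i\tilde\theta_i\otimes\xi_i=\sum_i\theta_i\otimes\xi_i'$ pure index relabelling already suffices, so the appeal to orthogonality there is unnecessary.
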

 \begin{proof}
  We denote by $(f',\xi'_i, \eta'_i, g')$ and by $(\bar{f},\bar{\xi}_i, \bar{\eta}_i,\bar{g})$ the metric $f$-contact structures on $M$ obtained from $(f,\xi_i, \eta_i,g)$ after performing  a rotation with respect to $A$, respectively a type II deformation with respect to $\tilde{\theta}_i$.
  A type II deformation of $(f',\xi'_i, \eta'_i, g')$ with respect to $\theta_1, \dots,\theta_s$ gives a new metric $f$-contact structure with the following structure tensors:
  \begin{equation*}
  \begin{aligned}
  \hat{\eta}_i&=\eta'_i+\theta_i, \quad \hat{\xi}_i=\xi'_i,\quad \hat{f}=f-\sum_{i=1}^s (\theta_i \circ f) \otimes \xi'_i,\\
  \hat{g} &=  g' +\sum_{i=1}^s \eta'_i \otimes \theta_i + \theta_i\otimes \hat{\eta}_i\\
          &=g-\sum_{i=1}^s\eta_i \otimes \eta_i +\sum_{i=1}^s\eta'_{i} \otimes \eta'_{i} +\sum_{i=1}^s \eta'_i \otimes \theta_i + \sum_{i=1}^s\theta_i\otimes (\eta'_i+\theta_i)\\
          &= g - \sum_{i=1}^s \eta_i\otimes \eta_i + \sum_{i=1}^s \hat{\eta}_i \otimes \hat{\eta}_i.
  \end{aligned}
  \end{equation*}
We check that the metric $f$-contact structure $(f'',\xi''_i, \eta''_i,g'')$ obtained from a rotation of $(\bar{f},\bar{\xi}_i, \bar{\eta}_i,\bar{g})$ with respect to $A$ coincides with $(\hat{f},\hat{\xi}_i, \hat{\eta}_i, \hat{g})$:
\begin{align*}
 \eta_i''&=\sum_{t=1}^s  c_t a_{ti} \bar{\eta}_t=\eta'_i +\sum_{t,k=1}^s  a_{ti} a_{tk}\theta_k=\eta'_i +\theta_i =\hat{\eta}_i, \quad \xi_i''=\hat{\xi}_i,\\
  \quad {f''} &= \bar{f}=f-\sum_{i=1}^s (\tilde{\theta}_i \circ f) \otimes {\xi}_i=f-\sum_{k=1}^s ({\theta}_k \circ f) \otimes \sum_{i=1}^s \frac{1}{c_i} a_{ik} {\xi}_i=\hat{f},\\
 {g''} &=  \bar{g} -\sum_{i=1}^s \bar{\eta}_i \otimes\bar{\eta}_i  + \sum_{i=1}^s{\eta}_i'' \otimes {\eta}_i'' \\
          &=g+\sum_{i=1}^s\eta_i \otimes \tilde{\theta}_i +\sum_{i=1}^s\tilde{\theta}_i \otimes \bar{\eta}_{i}-\sum_{i=1}^s \bar{\eta}_i \otimes \bar{\eta}_i +\sum_{i=1}^s \hat{\eta}_i\otimes \hat{\eta}_i=\hat{g}.
  \end{align*}
  \end{proof}
  \begin{remark}
The metric $f$-$K$-contact structure obtained from $(f,\xi_i, \eta_i,g)$ after a type II deformation with respect to $\theta_i+{\theta}'_i$ coincides with the metric $f$-$K$ contact structure obtained performing first a type II deformation with respect to $\theta$ of $(f,\xi_i, \eta_i,g)$ and then a type II deformation with respect to ${\theta}'_i$. In particular, the inverse operation of a type II deformation with respect to $\theta_i$ is a type II deformation with respect to $-\theta_i$.
\end{remark}
 
 \begin{remark}
 As the inverse operation of a rotation is an anti-rotation, cf.\ Remark \ref{remark rotation}, and the inverse operation of a type II deformation is again a type II deformation, anti-rotations and type II deformations commute in a similar way as in Lemma \ref{lem:rottypeIIcommute}.
 \end{remark}

 We recall from \cite[Section~3]{gl} how a metric $f$-structure induces in a natural way the same structure on the mapping torus of an automorphism. Let $(M,f,\xi_i,\eta_i,g)$ be a metric $f$-contact manifold and $\phi:M\rightarrow M$ an automorphism of the structure.
The tensors $(f,\xi_i,\eta_i,g)$ on $M$ induce the following natural metric $f$-contact structure on $M\times \R$:
 \begin{equation*}
 \begin{aligned}
  &\bar{f}(X)=f(X), \; \bar{f}\left(\frac{d}{dt}\right)=0, \bar{\eta}_{\alpha}(X)=\eta_{\alpha}(X), \; \bar{\eta}_{\alpha}\left(\frac{d}{dt}\right)=0,\\
  &\bar{\eta}_{s+1}(X)=\frac{1}{s}(\eta_1(X)+\dots +\eta_s(X)), \; \bar{\eta}_{s+1}\left(\frac{d}{dt}\right)=1,\\
 &\bar{\xi}_{s+1} :=\frac{d}{dt},\; \bar{\xi}_{\alpha} :=\xia-\frac{1}{s}\frac{d}{dt}, \quad \alpha = 1,\dots,s, \\
  \end{aligned}
 \end{equation*}
for each $X\in TM$ and where $\frac{d}{dt}$ denotes the standard coordinate vector field on $\mathbb{R}$,
 \begin{equation*}
\begin{aligned}
 & \bar{g}(X,Y)= g(X,Y), \; \bar{g}(X,\bar{\xi}_{\alpha})=0,\; \bar{g}(\bar{\xi}_{\alpha}, \bar{\xi}_{\beta})=\delta_{\alpha}^{\beta},
 \end{aligned}
\end{equation*}
for each  $X,Y \in \im(f)$ and ${\alpha}, {\beta} \in \{1,\dots,s+1\}$. This structure is invariant under the 
 $\Z$-action on $M\times \R$ determined by $\phi$:
 \[
  m \cdot (p,t) \mapsto (\phi^m(p),t+m t_0), 
 \]
where $t_0\in \R$, $t_0\neq 0$, and descends to the quotient of the $\Z$-action, i.e., the mapping torus $M_\phi$ of $(M,\phi)$, making it a metric $f$-contact manifold. In \cite{gl} we also computed that $M_\phi$ is a metric $f$-$K$-contact (resp.\ an $S$-)manifold if and only if $M$ is.

\section{Main result}
In this section we prove our main result, which states that any compact metric $f$-$K$-contact manifold is obtained from a compact $K$-contact manifold by successively applying rotations, type II deformations, and constructions of mapping tori. The main idea of the proof is to apply our cohomological splitting theorem for compact metric $f$-$K$-contact manifolds from \cite{gl} in order to find a suitable deformation (i.e., type II deformation, combined with anti-rotation) whose characteristic foliation has closed leaves, and then exhibit this deformed structure as a mapping torus.

\begin{lemma} \label{lemma H}
Let $M^{2n+s}$ be a compact connected metric $f$-$K$-contact manifold. Then
 \begin{equation*}
  H^1(M) = \mathrm{span}_{\R} \{[\eta_1-\eta_s],\ldots,[\eta_{s-1}-\eta_s]\}  \oplus H^1(M,\cF).
 \end{equation*}
 \end{lemma}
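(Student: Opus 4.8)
The plan is to show the sum $\mathrm{span}_{\R}\{[\eta_1-\eta_s],\ldots,[\eta_{s-1}-\eta_s]\} + H^1(M,\cF)$ is all of $H^1(M)$ and that it is direct. Recall that basic cohomology $H^\bullet(M,\cF)$ is the cohomology of the subcomplex of $\cF$-basic forms (those $\alpha$ with $\iota_{\xi_i}\alpha=0$ and $\mathcal{L}_{\xi_i}\alpha=0$ for all $i$), and the inclusion of basic forms into all forms induces the natural map $H^1(M,\cF)\to H^1(M)$ whose image is the second summand. The differences $\eta_i-\eta_s$ are closed, since $d\eta_i=\omega=d\eta_s$, so they define classes in $H^1(M)$; they are \emph{not} basic because $\iota_{\xi_j}(\eta_i-\eta_s)=\delta_{ij}-\delta_{sj}$ is generally nonzero.

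\emph{Surjectivity.} I would start with an arbitrary closed one-form $\gamma$ on $M$. Since the $\xi_i$ are Killing, averaging $\gamma$ over the closure of the flow of $\cF$ in the isometry group (a torus, by compactness of $M$) produces an $\cF$-invariant closed one-form cohomologous to $\gamma$; so without loss of generality $\mathcal{L}_{\xi_i}\gamma=0$ for all $i$. For an $\cF$-invariant closed form, $\iota_{\xi_i}\gamma$ is a function with $d(\iota_{\xi_i}\gamma)=\mathcal{L}_{\xi_i}\gamma - \iota_{\xi_i}d\gamma = 0$, hence constant on the connected manifold $M$; call it $c_i:=\iota_{\xi_i}\gamma\in\R$. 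Now set
\[
\gamma_0 := \gamma - \sum_{i=1}^{s-1} (c_i-c_s)\,(\eta_i-\eta_s) - c_s\sum_{i=1}^s \tfrac{?}{}\ \eta_i\ \ (\text{to be arranged}),
\]
more precisely I would subtract the appropriate linear combination of the $\eta_i-\eta_s$ to kill $\iota_{\xi_i}\gamma - \iota_{\xi_s}\gamma$ for all $i<s$, so that the remainder $\gamma'$ satisfies $\iota_{\xi_1}\gamma' = \cdots = \iota_{\xi_s}\gamma' =: c$. If $c=0$ then $\gamma'$ is basic and we are done. If $c\neq 0$, I need one more relation: I claim $\sum_{i}\eta_i$, or rather its appropriate scaling, must be handled separately — but here is the subtlety, so let me instead argue that $c$ must be zero. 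This is where I expect the main obstacle to lie: to see that any $\cF$-invariant closed one-form with all $\iota_{\xi_i}$ equal to a common constant $c$ forces $c=0$ on a \emph{compact} metric $f$-$K$-contact manifold. I would try to derive this from the cohomological splitting theorem of \cite{gl} cited in the section's introduction (the same tool the authors announce they will use), which presumably controls $H^1(M,\cF)$ and the position of $\omega$; alternatively, integrate $\gamma'\wedge(\text{top power of }\omega \text{ along leaves})$ against the volume form, using $d\eta_i=\omega$, to get $c\cdot\mathrm{Vol}(M)=0$ via Stokes. In fact the cleanest route: $\gamma'-c\,\eta_s$ is invariant with all interior products zero, hence basic, but $d(\gamma'-c\eta_s)=-c\,\omega$, so $c\,\omega$ is exact as a basic form; pairing with the transverse volume and integrating then forces $c=0$ by compactness — this is the key computation and the heart of the argument.

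\emph{Directness.} Suppose $\sum_{i=1}^{s-1} \lambda_i (\eta_i-\eta_s) = d h + \beta$ with $h\in C^\infty(M)$ and $\beta$ a closed basic one-form representing an element of $H^1(M,\cF)$, i.e.\ the left side lies in the image of $H^1(M,\cF)$ plus exact forms — I first show the span of the $[\eta_i-\eta_s]$ meets the image of $H^1(M,\cF)$ only in $0$, and that the $[\eta_i-\eta_s]$ are themselves linearly independent. For the former: if $\sum \lambda_i(\eta_i-\eta_s)$ is cohomologous to a basic form $\beta$, apply $\iota_{\xi_j}$ after $\cF$-averaging — since $\beta$ is basic and the averaged exact part contributes nothing to the constants $\iota_{\xi_j}$, we get $\lambda_j-\lambda_s\cdot[j=s\text{-terms}]$... concretely $\iota_{\xi_j}$ of the class-representative equals $\lambda_j$ for $j<s$ and $-\sum\lambda_i$ for $j=s$, all of which must vanish since $\beta$ contributes $0$; hence all $\lambda_i=0$. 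The same interior-product computation simultaneously shows linear independence of the $[\eta_i-\eta_s]$ among themselves. Combining surjectivity with directness gives the claimed direct-sum decomposition.
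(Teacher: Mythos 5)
Your argument is essentially correct, but it takes a genuinely different and more self-contained route than the paper. The paper's proof is short: it quotes the cohomological splitting theorem $H^*(M)\cong\Lambda(\R^{s-1})\otimes H^*(M,\cF_{s-1})$ from \cite{gl} (which already packages the classes $[\eta_i-\eta_s]$ as an exterior-algebra factor), and then identifies $H^1(M,\cF_{s-1})$ with $H^1(M,\cF)$ via a Gysin-type exact sequence whose connecting map is wedging with $\omega$, injective on $H^0(M,\cF)$ because $[\omega]\neq 0$ in $H^2(M,\cF)$. Your proof replaces the splitting theorem by the classical averaging argument for isometric flows: average over the torus closure of the flow, observe that the contractions $\iota_{\xi_i}\gamma$ become constants, normalize them by subtracting a combination of the $\eta_i-\eta_s$, and reduce the remaining obstruction to the non-exactness of $\omega$ in basic cohomology. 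Note that both proofs ultimately hinge on exactly that same input ($[\omega]\neq 0$ in $H^2(M,\cF)$, which is \cite[Lemma 6.3]{gl}; in your setting it follows from $\int_M\omega^n\wedge\eta_1\wedge\dots\wedge\eta_s\neq 0$ together with the fact that a basic $1$-form wedged with $\omega^n$ vanishes for degree reasons). Your route is more elementary and does not need the full splitting theorem; the paper's route is shorter given the machinery it has already built.

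Two points in your write-up need repair, both minor. First, the ``cleanest route'' step is miscomputed: for $s\geq 2$ the form $\gamma'-c\,\eta_s$ is \emph{not} basic, since $\iota_{\xi_j}(\gamma'-c\,\eta_s)=c-c\,\delta_{js}=c$ for $j\neq s$. The correct correction term is $c\sum_{i=1}^s\eta_i$, whose contraction with every $\xi_j$ equals $c$; then $\gamma'-c\sum_i\eta_i$ is basic with differential $-cs\,\omega$, and $[\omega]\neq 0$ in $H^2(M,\cF)$ forces $c=0$ as you intend. (Likewise the explicit coefficients $c_i-c_s$ in your first normalization are not quite right --- one needs $\lambda_i=c_i-\frac1s\sum_j c_j$ --- but you flag this as ``to be arranged'' and the linear system is clearly solvable.) Second, for the statement to be a direct sum with $H^1(M,\cF)$ itself as a summand, you also need injectivity of the natural map $H^1(M,\cF)\to H^1(M)$; this follows from the same averaging device (if a closed basic $1$-form equals $dh$, average $h$ to an $\cF$-invariant, hence basic, primitive), but it should be said.
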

\begin{proof}
By \cite[Theorem~4.5]{gl} we have an isomorphism
\[
H^*(M) \equiv \Lambda(\R^{s-1}) \otimes H^*(M,\cF_{s-1}),
\]
where $\Lambda(\R^{s-1})$ embeds in $H^*(M)$ as the exterior algebra over the cohomology classes $[\eta_i-\eta_s]$. We thus obtain
\[
 H^1(M) = \mathrm{span}_{\R} \{[\eta_1-\eta_s],\ldots,[\eta_{s-1}-\eta_s]\}  \oplus H^1(M,\cF_{s-1})
 \]
 and are left with showing $H^1(M,\cF_{s-1}) = H^1(M,\cF)$.

To show this we make use of one of the exact sequences from \cite[Proposition~4.4]{gl}:
\begin{align*}
0 \longrightarrow H^1(M,\cF)\longrightarrow H^1(M,\cF_{s-1}) \longrightarrow H^{0}(M,\cF) \overset{\delta}\longrightarrow H^{2}(M,\cF)\longrightarrow \cdots ,
\end{align*}
where the connecting homomorphism $\delta$ is given by $\delta([\sigma])=[\omega\wedge \sigma]$. Since the fundamental $2$-form $\omega$ is a non-zero element of $H^{2}(M,\cF)$ \cite[Lemma 6.3]{gl}, the map $\delta: H^{0}(M,\cF)\rightarrow H^{2}(M,\cF) $ is injective and thus $ H^1(M,\cF)\cong H^1(M,\cF_{s-1})$.
\end{proof}
 
\begin{lemma}\label{lemma eta}
Let $(M,f,\xi_1,\dots,\xi_s, \eta_1,\dots,\eta_s,g)$ be a compact connected metric $f$-$K$-contact manifold, where $s\geq 2$. Then there exists an anti-rotation $(\tilde{f},\tilde{\xi}_i, \tilde{\eta}_i,\tilde{g})$ of $(f,\xi_i, \eta_i,g)$ and a closed, ${\cF}$-basic, $1$-form $\theta$ on $M$, such that the cohomology class $ [ \eta]$, where \[
 \eta:=\tilde{\eta}_s-\frac{1}{s-1}(\tilde{\eta}_1+\dots +\tilde{\eta}_{s-1})+\theta, 
\]
is a real multiple of an integer class. 

Moreover, the closed $1$-form $\eta$ is  nowhere vanishing and 
determines a codimension-one foliation $\cF_{\eta}:=\ker \eta$ with compact leaves.
\end{lemma}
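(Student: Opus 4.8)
The plan is to combine the cohomological splitting of Lemma~\ref{lemma H} with an explicit computation of the cohomology classes of the one-forms appearing after an anti-rotation. First I would observe that, by Lemma~\ref{lemma H}, every degree-one cohomology class of $M$ decomposes as a real linear combination of the classes $[\eta_1-\eta_s],\ldots,[\eta_{s-1}-\eta_s]$ plus a basic class; in particular the class of any closed one-form obtained from the $\eta_i$ by a constant base change lies in the span of these generators modulo $H^1(M,\cF)$. The first step is therefore to write down, for an arbitrary orthogonal matrix $A$ as in Lemma~\ref{rotation}, the cohomology class $[\tilde\eta_s - \tfrac{1}{s-1}(\tilde\eta_1+\cdots+\tilde\eta_{s-1})]$ of the anti-rotated structure as an explicit $\R$-linear combination $\sum_{i=1}^{s-1} \lambda_i(A)\,[\eta_i-\eta_s]$, using $d\tilde\eta_i=\omega=d\eta_i$ and the defining formula $\tilde\eta_i = \tfrac{1}{c_i}\sum_t a_{it}\eta_t$. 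Since all the $\tilde\eta_i$ are cohomologous to $\eta_i$ only up to basic corrections coming from $d\tilde\eta_i = d\eta_i$, one must be a little careful: the difference $\tilde\eta_i-\eta_i$ is closed and $\cF$-basic, so it represents a class in $H^1(M,\cF)$, and these corrections get absorbed into the free choice of $\theta$.

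The second, crucial step is a density/approximation argument: I want to choose $A$ so that the coefficient vector $(\lambda_1(A),\ldots,\lambda_{s-1}(A))$ lies in a rational line of $\R^{s-1}$, i.e.\ is a real multiple of an integer vector. Here the key observation is that the map $A\mapsto (\lambda_i(A))$ is continuous (in fact smooth) on the open subset of $\mathrm{O}(s)$ where all $c_i\neq0$, and I expect its image to contain a nonempty open set, so that rational points (hence real multiples of integer points) are hit. Once the coefficients are rational, $\sum_i \lambda_i(A)[\eta_i-\eta_s]$ is a real multiple of an integer class in $\mathrm{span}_\R\{[\eta_1-\eta_s],\ldots,[\eta_{s-1}-\eta_s]\}$; the integrality of the $H^1(M,\cF)$-part is then arranged by subtracting off the basic corrections and absorbing everything into $\theta$ — concretely, $\theta$ is chosen so that $\eta = \tilde\eta_s-\tfrac1{s-1}\sum_{i<s}\tilde\eta_i+\theta$ is cohomologous to a chosen real multiple of an integer class, which is possible precisely because the basic part of $H^1(M)$ is a free summand and closed basic one-forms represent all of $H^1(M,\cF)$. (One should double-check that the correcting one-form can indeed be taken $\cF$-basic and closed; this follows from the fact that $\tilde\eta_i-\eta_i$ is basic and closed and that any class in $H^1(M,\cF)$ has a basic closed representative by definition of basic cohomology.)

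For the final assertions, I would argue as follows. The one-form $\eta$ is closed by construction. To see it is nowhere vanishing, evaluate it on the characteristic vector fields: since $\theta$ is $\cF$-basic it kills every $\tilde\xi_j$, and $\tilde\eta_i(\tilde\xi_j)=\delta_{ij}$, so $\eta(\tilde\xi_s) = 1$ and more generally $\eta$ is nonzero on a suitable combination of the $\tilde\xi_i$; hence $\eta$ never vanishes and $\cF_\eta:=\ker\eta$ is a smooth codimension-one (necessarily integrable, as $\eta$ is closed) foliation. Finally, compactness of the leaves: since $[\eta]$ is a real multiple of an integer class, after rescaling $\eta$ by a nonzero constant we may assume $[\eta]\in\operatorname{im}(H^1(M,\Z)\to H^1(M,\R))$, so $\eta$ is the pullback of the standard angular form under a smooth map $M\to S^1$; as $\eta$ is nowhere zero this map is a submersion, and by compactness of $M$ it is a fiber bundle over $S^1$ (Ehresmann), whose fibers are exactly the leaves of $\cF_\eta$ and are compact. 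The main obstacle I anticipate is the density step: verifying that the coefficient function $A\mapsto(\lambda_i(A))$ genuinely takes values in a set large enough to contain a real multiple of an integer vector — this requires understanding the algebraic form of $\lambda_i(A)$ in terms of the entries $a_{ij}$ and the normalizing constants $c_i$, and showing the resulting map is a submersion (or at least non-constant with open image) near some convenient matrix such as a small perturbation of the identity.
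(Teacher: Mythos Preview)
Your overall strategy matches the paper's: define the coefficient map $A\mapsto(\lambda_i(A))$ on the open set $U\subset\mathrm{O}(s)$ where all $c_i\neq 0$, show it is a submersion at the identity (the paper does exactly this, computing $(dh)_I$ on skew matrices with nonzero entries only in the last row/column), and combine with Lemma~\ref{lemma H} to hit a real multiple of an integer class. Your treatment of the final assertions (nowhere vanishing via $\eta(\tilde\xi_s)=1$; compact leaves via the Tischler/Ehresmann argument) is correct and is precisely the content of the reference the paper cites.

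There is, however, a genuine gap in your density step. You write that once the coefficient vector $(\lambda_1(A),\ldots,\lambda_{s-1}(A))$ is rational, $\sum_i\lambda_i(A)[\eta_i-\eta_s]$ is a real multiple of an integer class. This does not follow: the classes $[\eta_i-\eta_s]$ are in no way known to be integral, so rational coefficients in that basis say nothing about integrality in $H^1(M;\Z)$. Your subsequent plan---fix the $W$-part first, then use $\theta$ to match the basic part of some $\lambda z$ with $z$ integral---then requires the $W$-projection of $\R\cdot H^1(M;\Z)$ to contain your chosen $W$-part, which you have not arranged. The clean fix (and what the paper does implicitly) is to run the density argument on all of $H^1(M)$ at once: since $h$ has open image in $V\cong\mathrm{span}_\R\{[\eta_i-\eta_s]\}$ and $[\theta]$ ranges over all of $H^1(M,\cF)$, the pair $(A,\theta)$ produces an open subset of $H^1(M;\R)$, and real multiples of integer classes are dense there.

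A minor correction: you claim $\tilde\eta_i-\eta_i$ is $\cF$-basic, but $(\tilde\eta_i-\eta_i)(\xi_j)=\tfrac{a_{ij}}{c_i}-\delta_{ij}$ is generally nonzero. This slip is harmless for the argument, since the combination $\tilde\eta_s-\tfrac{1}{s-1}\sum_{i<s}\tilde\eta_i$ already lies in $\mathrm{span}_\R\{\eta_1-\eta_s,\ldots,\eta_{s-1}-\eta_s\}$ on the nose (its $\eta_j$-coefficients sum to zero), so no basic correction is needed at that stage.
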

 \begin{proof}
 We consider the open set $$U:=\{ A=(a_{ij})\in {\mathrm{O}}(s) \,|\, \sum_{t=1}^sa_{it}\neq 0, i=1,\dots,s \}$$ of ${\mathrm{O}}(s)$, and the map $h:U\rightarrow \R^s$, defined by
\[
h(A)= 
A^t \begin{pmatrix}                                                          
    \frac{1}{c_1(A)} & 0 & \dots & 0 \\
    0 & \frac{1}{c_2(A)} & \dots & 0 \\
    \vdots & \vdots & \ddots & \vdots \\
    0 & 0 & \dots & \frac{1}{c_s(A)}
  \end{pmatrix}   
       \begin{pmatrix}  
        -\frac{1}{s-1} \\ \vdots \\ -\frac{1}{s-1}\\ 1
       \end{pmatrix}.
\]
where $A^t$ is the transpose of the matrix $A$, and
 \[
  c_k: U\rightarrow \R; \quad (a_{ij}) \mapsto \sum_{j=1}^s a_{kj},
 \]
 for every $k\in \{1,\dots, s\}$.
 Observe that $h(A)$ gives the coordinates of the vector $$\eta:=\tilde{\eta}_s-\frac{1}{s-1}(\tilde{\eta}_1+\dots +\tilde{\eta}_{s-1}) \in \text{span}_{\R}\{\eta_1,\dots,\eta_s \},$$ with respect to the basis $\{\eta_1,\dots,\eta_s\}$, where $\tilde{\eta}_i=\frac{1}{c_i(A)}\sum_{t=1}^s  a_{it}\eta_t$ are the one-forms on $M$ obtained from $(f,\xi_i,\eta_i,g)$ after an anti-rotation with respect to $A$. Moreover, $ h$ maps to the codimension-one subspace of $\R^s$, 
 \[
  V:=\{ (u_1,\dots,u_s) \,|\, \sum_{i=1}^s u_i=0 \}\simeq \text{span}_{\R}\{\eta_1-\eta_s,\dots ,\eta_{s-1}-\eta_s\}.
 \]

To prove the first part of this lemma, by Lemma~\ref{lemma H}, it suffices to show that $\im {h}$ contains an open neighborhood of $\eta$ in $V$. For this purpose, we show that the image of the differential 
$(d {h})_{I}: \mathfrak{o}(s)\rightarrow \R^s $ contains $V$.
A direct computation shows that, for every $X\in \mathfrak{o}(s)$,
\begin{equation*}
 \begin{aligned}
  (d h)_{I}(X)                                                     
  =X^t \begin{pmatrix}  
        -\frac{1}{s-1} \\ \vdots \\ -\frac{1}{s-1}\\ 1
       \end{pmatrix}
       -\begin{pmatrix}                                                          
    {c_1(X)} & 0 & \dots & 0 \\
    0 & {c_2(X)} & \dots & 0 \\
    \vdots & \vdots & \ddots & \vdots \\
    0 & 0 & \dots & {c_s(X)}
  \end{pmatrix}   
       \begin{pmatrix}  
        -\frac{1}{s-1} \\ \vdots \\ -\frac{1}{s-1}\\ 1
       \end{pmatrix}.
 \end{aligned}
\end{equation*}
In particular, for every element of the orthogonal Lie algebra $\mathfrak{o}(s) $ of type
    \[ \tilde{X}=\begin{pmatrix}
        \begin{array}{c | c}
      \begin{array}{c c c}
       \bf{  0}
      \end{array} & \begin{array}{c c c} a_1 \\ \vdots \\a_{s-1}\end{array}
      \\
      \hline
       \begin{array}{c c c} -a_1& \dots &-a_{s-1}\end{array}& 0
     \end{array}
       \end{pmatrix},
    \]
    we have that
    \[
     (d h)_{I}(\tilde{X})=\left(\frac{1}{s-1}-1 \right) 
     \begin{pmatrix}  
        a_1 \\ \vdots \\ a_{s-1}\\ -\sum_{i=1}^{s-1}a_i
       \end{pmatrix}.
    \]
Then, $\im (dh)_I\supset V$. 

The second part of the lemma follows directly from \cite[Lemma 3.5]{bg}.   \end{proof}

\begin{proposition}\label{prop:onestep}
Let $M$ be a compact, connected, metric $f$-$K$-contact (resp.\ $S$-) manifold. Then, there exists a compact, metric $f$-$K$-contact (resp.\ $S$-) manifold $N$, so that $M$ is isomorphic to the mapping torus of $N$ with respect to an automorphism of the structure, up to a rotation and a type II deformation.
\end{proposition}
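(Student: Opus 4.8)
The plan is to reduce to the case $s \geq 2$ (for $s = 1$ a metric $f$-$K$-contact manifold is already $K$-contact, so nothing needs to be done) and then use Lemma~\ref{lemma eta} to produce, after an anti-rotation and a type II deformation, a closed nowhere-vanishing $1$-form $\eta$ whose class is a real multiple of an integer class and whose kernel foliation $\cF_\eta$ has compact leaves. First I would apply Lemma~\ref{lemma eta} to obtain the anti-rotation $(\tilde f, \tilde\xi_i, \tilde\eta_i, \tilde g)$ and the closed $\cF$-basic $1$-form $\theta$, and then perform the type II deformation with respect to the tuple $(\theta_1,\dots,\theta_s)$ given by $\theta_i := \theta$ for all $i$ (this is legitimate since $\theta$ is closed and $\cF$-basic); call the resulting structure $(f', \xi_i', \eta_i', g')$. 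By construction $\eta_s' - \frac{1}{s-1}(\eta_1' + \dots + \eta_{s-1}') = \eta$ after this deformation, and $[\eta]$ is rationally proportional to an integer class, so after rescaling $\eta$ (replacing it by a suitable positive multiple, which only rescales the fibration coordinate) we may assume $[\eta] \in \operatorname{im}(H^1(M;\Z) \to H^1(M;\R))$.

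Next I would realize $M$ with the deformed structure as a mapping torus. Since $\eta$ is closed, nowhere zero, and represents an integral class, it is (a positive multiple of) the pullback of the standard generator of $H^1(S^1)$ under a smooth fibration $p\colon M \to S^1$; the fiber $N := p^{-1}(\mathrm{pt})$ is a compact connected manifold and $M$ is diffeomorphic to the mapping torus $N_\phi$ of the return map $\phi$ of the flow of a vector field $Z$ with $\eta(Z) = 1$ that is chosen inside the span of the deformed characteristic vector fields — here one uses that the deformed characteristic foliation is spanned by $\xi_i'$ and that $\eta$ restricted to this span is a nonzero linear functional, so a constant-coefficient combination $Z = \sum b_i \xi_i'$ with $\eta(Z) = 1$ exists. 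The remaining work is to check that the ambient metric $f$-$K$-contact structure $(f', \xi_i', \eta_i', g')$ on $M \cong N_\phi$ is exactly the one induced (via the recalled mapping-torus construction) by a metric $f$-$K$-contact structure on the fiber $N$ together with the automorphism $\phi$. Concretely, one restricts the tensors $f', \xi_i', \eta_i', g'$ to $N$, massaging the characteristic data so that one of the $s$ characteristic directions becomes transverse to $N$ (playing the role of $\frac{d}{dt}$ in the construction) while the other $s-1$ restrict to characteristic vector fields of $N$; this is where the particular combination $\eta = \tilde\eta_s - \frac{1}{s-1}\sum_{i<s}\tilde\eta_i + \theta$ is tailored so that the normalization matches the $\bar\xi_{s+1} = \frac{d}{dt}$, $\bar\xi_\alpha = \xi_\alpha - \frac1s \frac{d}{dt}$ pattern.

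The main obstacle I anticipate is precisely this last verification: showing that the deformed structure on $M$ is \emph{isomorphic} to the mapping-torus structure on $N_\phi$ and not merely diffeomorphic as a manifold. One has to see that the leaves of $\cF_\eta$ — equivalently the fibers of $p$ — inherit a genuine metric $f$-$K$-contact structure of rank $s-1$, that $\phi$ is an automorphism of it, and that reassembling via the mapping-torus recipe reproduces $(f',\xi_i',\eta_i',g')$ on the nose. This requires checking that $N$ is invariant in the appropriate sense, that the Killing/closedness conditions pass to the quotient, and (in the $S$-manifold case) that normality is preserved — the latter following from the fact established in Section~\ref{sec:deformations} that the mapping-torus construction preserves the $S$-condition in both directions. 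Everything else (the existence of the fibration, compactness of $N$, existence of the constant-coefficient transverse vector field) is either Lemma~\ref{lemma eta}, the Tischler-type argument already cited via \cite[Lemma 3.5]{bg}, or elementary linear algebra; the bookkeeping that the structure tensors glue correctly is the genuinely substantive step.
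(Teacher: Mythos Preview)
Your overall strategy matches the paper's: anti-rotate, type~II deform, obtain a closed nowhere-vanishing $\eta$ whose kernel foliation has compact leaves, then realize $M$ as a mapping torus of a leaf via the flow of a characteristic vector field. The paper carries out the last verification explicitly (defining the induced structure on the leaf $N$ and writing down the isomorphism $N_{\varphi^{-1}}\to M$), which is exactly the step you flag as the substantive one.

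There is, however, a concrete error in your type~II deformation. If you set $\theta_i=\theta$ for all $i$, then
\[
\eta_s'-\frac{1}{s-1}\sum_{i<s}\eta_i'
=(\tilde\eta_s+\theta)-\frac{1}{s-1}\sum_{i<s}(\tilde\eta_i+\theta)
=\tilde\eta_s-\frac{1}{s-1}\sum_{i<s}\tilde\eta_i,
\]
so the $\theta$'s cancel and you do \emph{not} recover $\eta=\tilde\eta_s-\frac{1}{s-1}\sum_{i<s}\tilde\eta_i+\theta$. You need an asymmetric choice, e.g.\ $\theta_s=\theta$ and $\theta_i=0$ for $i<s$ (or any tuple with $\theta_s-\frac{1}{s-1}\sum_{i<s}\theta_i=\theta$). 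This is easily repaired but as written the claim is false.

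A smaller point: your transverse vector field $Z=\sum b_i\xi_i'$ can simply be taken to be $\xi_s'$, since $\eta(\xi_s')=1$; this is what the paper does (in its indexing, $\xi_{s+1}$), and it makes the automorphism property of the return map and the matching with the mapping-torus recipe immediate, because the flow of a single characteristic vector field manifestly preserves all structure tensors in the $f$-$K$-contact setting.
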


\begin{proof}
Let $(f,\xi_1,\dots,\xi_{s+1}, \eta_1,\dots,\eta_{s+1},g)$ be the metric $f$-$K$-contact structure on $M$, and consider the $1$-form
 \[
  \eta:= \eta_{s+1}-\frac{1}{s}\left(\eta_1+\dots +\eta_s \right),
 \] 
which is nowhere vanishing and closed, since $\eta(\xi_{s+1})=1$ and $d \eta_1=\dots =d \eta_{s+1}$.
By Lemma~\ref{lemma eta}, up to performing an anti-rotation and a type II deformation of the structure, 
we can assume that $[\eta]$ is a real multiple of an integer class, and
the leaves of the foliation $\cF_{\eta}$ defined by $\ker \eta$
are compact. We will show that in this situation $M$ is isomorphic to the mapping torus of a compact metric $f$-$K$-contact manifold $N$ with respect to an automorphism of $N$; as rotation and anti-rotation are inverse to each other by Remark \ref{remark rotation}, this will imply the proposition.

We observe that, for every $X\in \im f$, 
\[
 0=2d\eta(\xi_{s+1},X)=\xi_{s+1}(\eta(X))-X\eta(\xi_{s+1})-\eta[\xi_{s+1},X]=-\eta[\xi_{s+1},X];
\]
 thus $[\xi_{s+1},X]\in T \cF_{\eta}$ and, as the $\xi_i$ commute with each other, we obtain $[\xi_{s+1}, T \cF_{\eta}]\subset T \cF_{\eta}$. 
Then, by Proposition 2.2 of \cite{molino}, the flow $\phi$ of $\xi_{s+1}$ leaves $\cF_{\eta}$ invariant, namely it carries leaves of $\cF_{\eta}$ to leaves.

Let $N$ be any leaf of $\cF_{\eta}$. 
 We observe that there exists $t_0\in \R$ such that $\varphi:=\phi_{t_0}$ maps $N$ to itself: the map 
\[
 N\times \R\rightarrow M; \; (p,t)\mapsto \phi(p,t),
\]
is a local diffeomorphism and hence in particular $\phi(N\times \R)\subset M$ is an open, $\cF_{\eta}$-saturated subset. This implies that $\phi|_{N\times \R}:N\times \R\rightarrow M$ is surjective (otherwise $M\smallsetminus \phi(N\times \R)\neq \emptyset$ would be an open, $\cF_\eta$-saturated set, contradicting the fact that $M$ is connected). 
Thus, by the compactness of $M$, $\phi|_{N\times \R}:N\times \R\rightarrow M$ is not injective. Let $(p,t), (q,s)\in N\times \R$ such that $(p,t)\neq (q,s)$ and $\phi(p,t)=\phi(q,s)$. Since $\phi$ maps leaves to leaves and $\phi_{t-s}(p)=q$, we have that the map $\phi_{t-s}$ maps $N$ to $N$.

We construct on $N$ a metric $f$-$K$-contact structure such that $\varphi:N\rightarrow N$ is an automorphism of that structure. For every $i=1,\dots, s$, we denote by $\bar{\eta}_i$ the pullback one-form of $\eta_i$ on $N$, via the immersion $j:N\hookrightarrow M$, and by $\bar{\xi}_i$ the vector field on $N$, $j$-related to the vector field $ \xi_{i}+\frac{1}{s}\xi_{s+1}$ on $M$ tangent to $\cF_{\eta}$ (observe that $\eta\left(\xi_{i}+\frac{1}{s}\xi_{s+1}\right)=0$).
We define moreover a Riemannian metric $\bar{g}$ and a $(1,1)$-tensor $\bar{f}$ on $N$ by:
\begin{equation*}
 \begin{aligned}
 &\bar{g}(X,Y)=g(X,Y), \; \bar{g}(X,\bar{\xi}_i)=0, \; \bar{g}(\bar{\xi}_i,\bar{\xi}_j)=\delta_{ij},\\
 &\bar{f}(\bar{\xi}_i)=0, \;\bar{f}(X)=f(X),
 \end{aligned}
\end{equation*}
for every $i=1,\dots, s$, and $X,Y\in  \ker (\bar{\eta}_1)_p\cap \dots \cap \ker(\bar{\eta}_s)_p=(\im \bar{f})_p=(\im f)_p$, $p\in N$.
It is easy to check that $(\bar{f},\bar{\xi}_i,\bar{\eta}_i,\bar{g})$ is a metric $f$-$K$-contact structure on $N$, and the diffeomorphism $\varphi:N\rightarrow N$ preserves the tensors $\bar{\xi}_i$,  $\bar{\eta}_i$, $\bar{g}$, $\bar{f}$.
If $(f,\xi_1,\dots,\xi_{s+1}, \eta_1,\dots,\eta_{s+1},g)$ is an $S$-structure on $M$, then for all local vector fields $X,Y$ on $M$ tangent to $\cF_{\eta}$,
\begin{equation*}
\begin{aligned}
 ([\bar{f},\bar{f}]+2 \sum_{\alpha=1}^{s} d\bar{\eta}_{\alpha} \otimes \bar{\xi}_{\alpha})(X,Y) &=  [f,f](X,Y)+2 \sum_{\alpha=1}^{s} d{\eta}_{\alpha}(X,Y) \left({\xi}_{\alpha}+\frac{1}{s}\xi_{s+1}\right)\\
    &=    [f,f](X,Y)+2 \sum_{\alpha=1}^{s+1} d{\eta}_{\alpha}(X,Y) {\xi}_{\alpha}\\
    &=0,                                  
\end{aligned}
\end{equation*}
and hence $(N, \bar{f},\bar{\xi}_i,\bar{\eta}_i,\bar{g})$ is an $S$-manifold.

By \cite[Section 3]{gl}, and as recalled in Section \ref{sec:deformations}, the metric $f$-$K$-contact (resp.\ $S$-)structure on $N$ determines a metric $f$-$K$-contact (resp.\ $S$-)structure $(\tilde{f}, \tilde{\xi}_i, \tilde{\eta}_i, \tilde{g})$ on the mapping torus $N_{\varphi^{-1}}$.
Finally, observe that the map
\begin{equation*}
 \Psi: N_{\varphi^{-1}} \rightarrow M; \quad [(p,t)]\mapsto \phi(p,t),
\end{equation*}
is well defined, injective and a local diffeomorphism. Moreover, since $N_{\varphi^{-1}}$ and $M$ are compact, $\Psi$ is a diffeomorphism, which by construction preserves the given structure tensors: for every $X,Y\in \im f$, 
and $\alpha \in \{1,\dots ,s\}$,
\begin{equation*}
 \begin{aligned}
 &(d \phi)_{(p,t)}\tilde{\xi}_{\alpha}=(d \phi_t)_{p}({\xi}_{\alpha}+\frac{1}{s}{\xi}_{s+1})-\frac{1}{s}(d \phi_p)_{t}\left.\frac{d}{dt}\right|_t={\xi}_{\alpha}, \\
 &(d \phi)_{(p,t)}\tilde{\xi}_{s+1}=(d \phi_p)_{t}\left.\frac{d}{dt}\right|_t={\xi}_{s+1},\\
 &\tilde{g}(X,Y)=\bar{g}(X,Y)={g}(X,Y)=g((d \phi_t)_p X,(d \phi_t)_p Y)=(\phi^*g)(X,Y),\\
 &g((d \phi_t)_p X,(d \phi_t)_p \tilde{\xi}_i)=g((d \phi_t)_p X,{\xi}_i)=g(X, (d \phi_{-t})_p{\xi}_i)=0=\tilde{g}(X,\tilde{\xi}_i),\\
 &f((d \phi_t)_p X)=f(X)=\bar{f}(X)=\tilde{f}(X).
 \end{aligned}
 \end{equation*}
\end{proof}

\begin{theorem}
Any compact connected metric $f$-$K$-contact (resp.\ $S$-) manifold is obtained from a compact $K$-contact (resp.\ Sasakian) manifold by a finite iteration of the following operations:
\begin{enumerate}
\item construction of the mapping torus with respect to an automorphism
\item rotation
\item type II deformation
\end{enumerate}
\end{theorem}
\begin{proof}
This follows directly by applying Proposition \ref{prop:onestep} inductively, as in each step the dimension of the manifold decreases by one.
\end{proof}

\end{document}